\newcommand{\Titolo}{Holomorphic curves in Mixed Shimura Varieties}
\title{\Titolo}
\author{Michele~Giacomini}
\date{}
\renewcommand{\bar}[1]{\overline{#1}}
\renewcommand{\tilde}[1]{\widetilde{#1}}
\newcommand{\abs}[1]{\left\vert #1\right\vert}
\DeclareMathOperator{\Zar}{Zar}
\DeclareMathOperator{\unif}{unif}
\theoremstyle{definition}
\newtheorem{definition}{Definition}[section]
\theoremstyle{plain}
\newtheorem{theorem}[definition]{Theorem}
\newtheorem{lemma}[definition]{Lemma}
\theoremstyle{remark}
\newtheorem{remark}[definition]{Remark}
\numberwithin{equation}{section}
\begin{document}

\title{Holomorphic curves in Mixed Shimura Varieties}

\maketitle

\begin{abstract}
    \textbf{Abstract.} Following work by Ullmo and Yafaev, we propose and prove an analogue of the Bloch-Ochiai theorem in the context of mixed Shimura varieties. We follow the strategy and use results of previous articles by Ullmo and Yafaev and the author plus consequences of the Ax-Lindemann-Weierstrass theorem for mixed Shimura varieties due to Gao.
\end{abstract}

\section{Introduction} %
\label{sec:introduction}

    The Bloch-Ochiai theorem \cite[cf. Chapter 9, 3.9.19]{Kobayashi98} states that the Zariski closure of an holomorphic curve in an abelian variety is a coset of an abelian subvariety.

    \begin{theorem}[Bloch-Ochiai]
        \label{thm:Bloch-Ochiai}
        Let $A$ be an abelian variety and $f:\mathbb{C}\to A$ a non-constant holomorphic map. Then the Zariski closure of $f(\mathbb{C})$ is a translate of an abelian subvariety.
    \end{theorem}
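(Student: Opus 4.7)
The plan is to argue by induction on $\dim X$, where $X := \overline{f(\mathbb{C})}^{\Zar}$. After translating so that $0 \in X$, we may replace $A$ by the smallest abelian subvariety containing $X$ and thereby assume that $X$ is not contained in any proper abelian subvariety of $A$. Since $f$ is non-constant, $X$ is irreducible of dimension at least $1$.

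Let $H := \mathrm{Stab}_A(X)$ and let $H^0$ be its identity component, an abelian subvariety. If $\dim H^0 \geq 1$, quotient by $H^0$: set $A' := A/H^0$ and $f' := p \circ f$, where $p : A \to A'$ is the projection. Since $p$ is proper, $p(X) = \overline{f'(\mathbb{C})}^{\Zar}$ has dimension $\dim X - \dim H^0 < \dim X$, and the induction hypothesis gives that $p(X)$ is a translate of an abelian subvariety of $A'$. Pulling back through $p$, whose fibres over $p(X)$ are $H^0$-cosets contained in $X$, shows that $X$ itself is a translate of an abelian subvariety of $A$.

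It remains to rule out the case where $H^0$ is trivial, i.e.\ $H$ is finite. Here I would invoke two classical ingredients: Kawamata's structure theorem, that an irreducible subvariety of an abelian variety with finite stabilizer and positive dimension is of log general type; and Bloch's theorem for subvarieties of abelian varieties (refined in this generality by Noguchi and others), that a log-general-type subvariety of an abelian variety does not admit a Zariski-dense entire holomorphic curve. Together these contradict $X = \overline{f(\mathbb{C})}^{\Zar}$ under the finite-stabilizer assumption, so this case cannot occur.

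I expect the main obstacle to be the second ingredient, which rests on Nevanlinna-theoretic estimates and is essentially the analytic heart of Bloch-Ochiai. An alternative, more in the spirit of the present paper, is an Ax-Lindemann approach: lift $f$ to $\tilde f : \mathbb{C} \to V := \Lie(A)$, consider the smallest complex affine subspace $L \subset V$ containing $\tilde f(\mathbb{C})$, and apply Ax's theorem for abelian varieties to conclude that $\overline{\pi(L)}^{\Zar}$ is a translate of an abelian subvariety containing $X$. The delicate point is then to show that this translate equals $X$ via a minimality/rigidity argument; this is precisely the step that, in the mixed Shimura setting treated by the paper, is handled by Gao's Ax-Lindemann-Weierstrass theorem.
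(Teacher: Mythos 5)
The paper offers no proof of this theorem: it is invoked as a classical result with a reference to Kobayashi's book, and its role is purely to motivate the Shimura-variety analogues that the paper actually proves. So there is no ``paper's proof'' to compare against; what follows is an assessment of your sketch on its own terms.

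Your primary sketch is the standard argument and is correct in outline. Reduce via the Ueno--Kawamata fibration to the case where $X := \Zar(f(\mathbb{C}))$ has finite stabilizer (one small terminological slip: for a closed irreducible subvariety of an abelian variety with finite stabilizer and positive dimension, the Kawamata structure theorem gives that $X$ is of \emph{general type}; ``log general type'' is the right notion for the open/semiabelian generalization due to Noguchi, not for this closed abelian case), and then appeal to the hyperbolicity theorem that a general-type subvariety of an abelian variety carries no Zariski-dense entire curve. You correctly identify this second ingredient as the analytic heart of Bloch--Ochiai; the induction and fibration wrapper around it is essentially bookkeeping, and it is sound as you have written it.

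Your floated Ax--Lindemann alternative, however, does not go through, and the paper's own introduction explains why (last paragraph before the acknowledgements, referencing the work of Ullmo and Yafaev in \cite{UllmoYafaev17}): the space $\Lie(A)$ uniformising an abelian variety has no bounded realisation, so the definability and point-counting machinery that drives the Shimura-variety results cannot be set up. There is also a more elementary obstruction to your specific suggestion: the smallest complex affine subspace $L$ containing $\tilde f(\mathbb{C})$ can be all of $\Lie(A)$ even when $X$ is a proper subvariety, in which case $\Zar(\pi(L)) = A$ carries no information about $X$. This gap is precisely why the paper's main theorem assumes the projection of $f$ to the pure part is non-constant, so that the counting can be carried out in the pure Shimura direction (where bounded realisations exist) rather than in the fibre.
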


    In \cite{UllmoYafaev18}, Ullmo and Yafaev formulate and prove an analogue of this result for compact Shimura varieties, later generalised to all Shimura varieties by the author.

    Let $\mathcal{D}$ be a hermitian symmetric space realised as a bounded symmetric domain in $\mathbb{C}^{n}$ via the Harish-Chandra embedding\cite[see Chapter 4]{Mok89}, $G$ its isometry group and $\Gamma\subset G(\mathbb{R})$ an arithmetic lattice. Let $S=\Gamma\backslash\mathcal{D}$. Assume that $S$ is a component of a Shimura variety; in particular $G$ is defined over $\mathbb{Q}$ and $\Gamma$ is a congruence subgroup of $G(\mathbb{Q})$. Finally consider a holomorphic function $f:\mathbb{C}^{m}\to \mathbb{C}^{n}$ such that $f(\mathbb{C}^{m})\cap \mathcal{D}\neq \emptyset$. 

    \begin{theorem}[{\cite[Theorem 1.2]{UllmoYafaev18} \cite[Theorem 1.4]{Giacomini18}}]
        \label{thm:holomorphic curves in compact Shimura}
        Let $\pi:\mathcal{D}\to S$ be the quotient map, $f$ as above and $V=f(\mathbb{C}^{m})\cap \mathcal{D}$. Assume $S$ is compact then the components of the Zariski closure $\Zar(\pi(V))$ of $\pi(V)$ in $S$ are weakly special subvarieties of  $S$.
    \end{theorem}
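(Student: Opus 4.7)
The plan is to reduce the statement to the hyperbolic Ax--Lindemann--Weierstrass theorem by replacing the transcendental set $f(\mathbb{C}^m)$ with its complex algebraic envelope in the Harish-Chandra ambient. Concretely, I would let $W\subset \mathbb{C}^n$ be the smallest complex algebraic subvariety containing $f(\mathbb{C}^m)$; then $W\cap \mathcal{D}$ is an algebraic subset of $\mathcal{D}$ in the Harish--Chandra sense and contains $V$, so that
\[
\Zar(\pi(V))\subseteq \Zar(\pi(W\cap \mathcal{D})).
\]

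The hyperbolic Ax--Lindemann theorem for Shimura varieties (Klingler--Ullmo--Yafaev and Pila--Tsimerman) then implies that the irreducible components of $\Zar(\pi(W\cap \mathcal{D}))$ are weakly special subvarieties of $S$. In particular every irreducible component $Y$ of $\Zar(\pi(V))$ is contained in some weakly special subvariety $Y^{\mathrm{ws}}$ of $S$. To finish I need the reverse inclusion, i.e.\ that $Y$ actually equals $Y^{\mathrm{ws}}$. Here I would use a density argument relying on the compactness of $S$: since $f(\mathbb{C}^m)$ is by construction Zariski dense in $W$, and since in the compact (hence projective, by Baily--Borel) variety $S$ closed analytic subsets coincide with Zariski closed subsets by Chow's theorem, one can promote analytic density of $\pi(V)$ in the relevant component of $\pi(W\cap \mathcal{D})$ to Zariski density, giving $\Zar(\pi(V))=\Zar(\pi(W\cap \mathcal{D}))$.

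The main obstacle is precisely this last density step: bridging the algebraic Zariski closure of $f(\mathbb{C}^m)$ in the affine ambient $\mathbb{C}^n$ and the Zariski closure of $\pi(V)$ in $S$. One must rule out the possibility that $\pi(V)$ is a ``thin'' analytic slice of $\pi(W\cap \mathcal{D})$ whose Zariski closure in $S$ is strictly smaller. Compactness of $S$ is essential here: it ensures that $\pi$ is a covering of a projective variety, so that $f(\mathbb{C}^m)$ cannot escape to the boundary of $\mathcal{D}$ in a way that makes $\pi(V)$ degenerate, and it makes Chow's theorem available. Extending this ingredient to the setting of the present paper (mixed Shimura varieties, which are typically non-compact) is what will require serious additional work, and is presumably where Gao's Ax--Lindemann--Weierstrass theorem in the mixed case, together with the boundary analysis from the author's earlier article, enter.
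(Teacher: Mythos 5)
Your reduction to Ax--Lindemann via the affine Zariski closure $W=\Zar(f(\mathbb{C}^m))\subset\mathbb{C}^n$ does not work, and the spot where you flag a possible problem (``the main obstacle is precisely this last density step'') is exactly where the strategy breaks down in an unrepairable way. The set $W$ is in general far too large to carry any useful information: for a ``generic'' holomorphic $f$ the image $f(\mathbb{C}^m)$ is Zariski dense in $\mathbb{C}^n$, so $W=\mathbb{C}^n$ and $W\cap\mathcal{D}=\mathcal{D}$; Ax--Lindemann applied to $W\cap\mathcal{D}$ then only tells you that $\Zar(\pi(\mathcal{D}))=S$ is weakly special, which is vacuous, while $\Zar(\pi(V))$ can be a proper subvariety of $S$. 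Your proposed fix --- Chow's theorem plus compactness to ``promote analytic density to Zariski density'' --- has no content here, because there is no analytic density to promote: $V$ is typically a positive-codimension transcendental slice of $W\cap\mathcal{D}$, and $f(\mathbb{C}^m)$ being Zariski dense in $W$ says nothing about $\pi(V)$ being (analytically or Zariski) dense in any component of $\pi(W\cap\mathcal{D})$. In short, passing to the affine Zariski closure discards precisely the holomorphic structure of $f$ that the theorem is about.

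The actual argument (in Ullmo--Yafaev, in the author's earlier paper, and in the mixed version proved here) takes a fundamentally different route. One does not try to algebraize $f(\mathbb{C}^m)$ directly; instead one sets $Y=\Zar(\pi(V))$ and produces, inside $\unif^{-1}(Y)$, a positive-dimensional semialgebraic set through a point of $V$. This is done by a counting argument: the boundedness of the Harish--Chandra realization of $\mathcal{D}$, together with the value-distribution behavior of $f$ as its image approaches $\partial\mathcal{D}$, forces the image to meet polynomially many $\Gamma$-translates of a fixed fundamental domain (this is where compactness of $S$ enters in the pure case: it removes boundary complications for the Shimura variety itself, not for $W$). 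Pila--Wilkie then yields a positive-dimensional semialgebraic (hence, by Pila--Tsimerman, complex algebraic) subset of $\unif^{-1}(Y)$, and only at this stage is Ax--Lindemann invoked, applied to that semialgebraic set rather than to $W\cap\mathcal{D}$. Finally a monodromy/Hodge-genericity argument (Ullmo's Th\'eor\`eme~1.3, or its mixed analogue, Theorem~\ref{thm:subvars with zar-dense w.s. subvars}) upgrades ``covered by weakly special subsets'' to ``is weakly special.'' None of these ingredients appear in your sketch, and without them the argument does not close.
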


    For general definitions about Shimura varieties and weakly special subvarieties see \cite{UllmoYafaev14} and the references therein.

    Along with the Bloch-Ochiai theorem the above result draws inspiration from the hyperbolic Ax-Lindemann theorem, first proven by Ullmo and Yafaev in \cite{UllmoYafaev14} for compact Shimura varieties and then in general by Klingler, Ullmo and Yafaev in \cite{KlinglerUllmoYafaev16}.

    \begin{theorem}[Ax-Lindemann]
        \label{thm:Ax-Lindemann}
        Let $Y\subset \mathcal{D}$ be an algebraic subset of $\mathcal{D}$\footnote{An algebraic subset of $\mathcal{D}$ is a component of the intersection of an algebraic subset of $\mathbb{C}^{n}$ with $\mathcal{D}$}. Then the components of the Zariski closure $\Zar(\pi(Y))$ are weakly special.
    \end{theorem}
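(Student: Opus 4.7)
The plan is to follow the Pila--Zannier strategy of combining o-minimality with point counting, as pioneered by Pila and developed for this setting by Ullmo--Yafaev. First I would fix a Siegel fundamental set $F \subset \mathcal{D}$ for $\Gamma$ and invoke the fact that the restriction $\pi|_F$ is definable in the o-minimal structure $\mathbb{R}_{\an,\exp}$; this is the bridge between the arithmetic of $\Gamma$ and the complex analytic geometry of $\mathcal{D}$. Choosing an irreducible component $Z \subset \Zar(\pi(Y))$, I would enlarge $Y$ to an irreducible complex algebraic subset $\tilde{Y} \subseteq \pi^{-1}(Z)$ of maximal dimension among those whose projection is Zariski dense in $Z$; such a $\tilde{Y}$ exists by noetherianity.

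The heart of the argument concerns the semialgebraic set
\[
    \Sigma = \{g \in G(\mathbb{R}) : g \tilde{Y} \subseteq \pi^{-1}(Z)\},
\]
together with its integer points $\Sigma \cap \Gamma$. On the analytic side, a volume comparison between $F$ and $\tilde{Y}$ produces a polynomial lower bound of the form $\#\{\gamma \in \Sigma \cap \Gamma : H(\gamma) \leq T\} \geq c\,T^{\delta}$ for some $\delta>0$, since the algebraic $\tilde{Y}$ must cover many $\Gamma$-translates of $F$. On the definable side, the Pila--Wilkie counting theorem precludes such growth outside the algebraic part of $\Sigma$, and therefore forces the existence of a positive-dimensional real semialgebraic arc in $\Sigma$ passing through the identity.

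A Lie-theoretic exponentiation then upgrades this arc to a positive-dimensional connected algebraic subgroup $H \subseteq G(\mathbb{R})$ with $H \tilde{Y} \subseteq \pi^{-1}(Z)$. Iterating with $\tilde{Y}$ replaced by the algebraic orbit $H \tilde{Y}$, and using the maximality of $\dim \tilde{Y}$, the procedure stabilises after finitely many steps at a subgroup $H_{\max}$ preserving $\pi^{-1}(Z)$ setwise. One then argues that the identity component of $H_{\max}$ descends to a $\mathbb{Q}$-subgroup $N \subseteq G$ via a monodromy argument exploiting Zariski density of $\Gamma \cap H_{\max}$, and identifies $Z$ with the image of an orbit of $N$, which puts it in the Mumford--Tate list describing weakly special subvarieties.

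The main obstacle I expect is the descent from a real algebraic stabilizer to a $\mathbb{Q}$-rational one: Pila--Wilkie only supplies real semialgebraic families inside $\Sigma$, and forcing the resulting subgroup to be $\mathbb{Q}$-defined requires a delicate Mumford--Tate/monodromy argument, which is essentially where the bulk of the technical work in \cite{KlinglerUllmoYafaev16} resides. A secondary difficulty is establishing the required definability of $\pi|_F$ uniformly at the cusps in the non-cocompact case, which demands Peterzil--Starchenko style control of the boundary behaviour of the uniformization and is absent in the compact setting treated originally in \cite{UllmoYafaev14}.
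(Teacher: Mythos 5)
The paper does not prove Theorem \ref{thm:Ax-Lindemann}; it is stated purely as background and cited directly from \cite{UllmoYafaev14} (cocompact case) and \cite{KlinglerUllmoYafaev16} (general case). So there is no in-paper proof to compare against. Judged on its own, your sketch is an accurate high-level account of the Pila--Zannier strategy that those references actually use: definability of $\pi|_F$ in $\mathbb{R}_{\an,\exp}$, a maximal algebraic $\tilde{Y}\subseteq\pi^{-1}(Z)$, a hyperbolic-volume lower bound on $\#\{\gamma\in\Gamma : H(\gamma)\le T,\ \gamma\tilde Y\subseteq\pi^{-1}(Z)\}$, Pila--Wilkie to extract a positive-dimensional semialgebraic subset of $\Sigma$, passage to a positive-dimensional stabilizer of $\tilde Y$, and a monodromy/Mumford--Tate argument to descend to a $\mathbb{Q}$-group. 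You also correctly flag the two genuinely hard points: $\mathbb{Q}$-rationality of the resulting group and cuspidal definability via Peterzil--Starchenko estimates.

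Two small imprecisions worth tightening. First, Pila--Wilkie does not hand you an arc ``through the identity''; it gives a semialgebraic block $B\subseteq\Sigma$ containing many lattice points of bounded height, and one then considers products $\gamma_1^{-1}\gamma_2$ with $\gamma_i\in B\cap\Gamma$ to land in the stabilizer $\Theta_{\tilde Y}=\{g : g\tilde Y=\tilde Y\}$ and show it is positive-dimensional. Second, the maximality of $\tilde Y$ is what forces $g\tilde Y\subseteq\pi^{-1}(Z)$ to actually imply $g\tilde Y=\tilde Y$ for $g$ near the identity in $\Theta_{\tilde Y}^{\circ}$ (otherwise one could enlarge $\tilde Y$); this is the step that turns the arc into a stabilizing group rather than a one-parameter ``exponentiation'' of the arc itself. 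With those adjustments your outline matches the standard proof.
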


    The aim of this paper is to prove a generalisation of theorem \ref{thm:holomorphic curves in compact Shimura} to the case of mixed Shimura varieties.

    Let $ (P,\mathcal{X}^{+}) $ be a connected mixed Shimura datum, $ G $ the quotient of $ P $ by its unipotent radical and $ (G,\mathcal{X}^{+}_{G}) $ the quotient connected pure Shimura datum. Fix an arithmetic subgroup $ \Gamma \subset P(\mathbb{Q}) $ and denote by $ \Gamma_{G} $ the image of $ \Gamma $ in $ G $. Finally let $ \mathbb{C}^{N} $ be the holomorphic tangent space to $ \mathcal{X}^{+}_{G} $ at some point $ x_{0} $. Use the Harish-Chandra embedding theorem to embed $ \mathbb{C}^{N} $ as a Zariski open subset of $ \mathcal{X}^{+\vee}_{G} $ and embed $ \mathcal{X}^{+}_{G} $ as a analytically open and bounded subset of $ \mathbb{C}^{N} $. All the above fit in the following diagram.

    \begin{center}
            \begin{tikzpicture}[scale=1.5]
                    \node (Xv)      at      (0,1)       {$ \mathcal{X}^{+\vee} $};
                    \node (XGv)     at      (0,0)       {$ \mathcal{X}^{+\vee}_{G} $};
                    \node (CN)      at      (1,0)       {$ \mathbb{C}^{N} $};
                    \node (X)       at      (2,1)       {$ \mathcal{X}^{+} $};
                    \node (XG)      at      (2,0)       {$ \mathcal{X}^{+}_{G} $};
                    \node (S)       at      (5,1)       {$ S = \Gamma\backslash \mathcal{X}^{+} $};
                    \node (SG)      at      (5,0)       {$ S_{G} = \Gamma\backslash \mathcal{X}^{+}_{G}$};
                    \draw[-stealth] (Xv)        edge    node[left]  {$ \pi $}       (XGv)
                    (X)     edge    node[left]  {$ \pi $}   (XG)
                    (X)     edge    node[above] {$ \unif $} (S)
                    (XG)    edge    node[above] {$ \unif_{G} $} (SG)
                    (S)     edge    node[left]  {$ [\pi] $} (SG);
                    \draw[left hook - stealth] (X)      edge    (Xv)
                    (XG)        edge    (CN)
                    (CN)        edge    (XGv);
            \end{tikzpicture}
    \end{center}

    Now consider an holomorphic map $ f: \mathbb{C}\to \mathcal{X}^{+\vee} $ such that the image of the composition $ \pi\circ f $ is contained in $ \mathbb{C}^{N} $. We prove the following result.

    \begin{theorem}[Main result]
        \label{thm:holomorphic curves in mixed Shimura}
        Assume that $ unif_{G}(\pi\circ f(\mathbb{C})\cap \mathcal{X}^{+}_{G}) $ is not a single point. Then the Zariski closure $ Zar(unif(f(\mathbb{C})\cap \mathcal{X}^{+})) $ is a weakly special subvariety of $ S $.
    \end{theorem}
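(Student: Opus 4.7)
The plan is to follow the strategy of Ullmo--Yafaev and Giacomini in the pure case, combining Theorem \ref{thm:holomorphic curves in compact Shimura} with Gao's mixed Ax-Lindemann-Weierstrass theorem.

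The first step handles the pure projection. The hypothesis that $\unif_G(\pi \circ f(\mathbb{C}) \cap \mathcal{X}^+_G)$ is not a single point places the map $\pi \circ f : \mathbb{C} \to \mathbb{C}^N \subset \mathcal{X}^{+\vee}_G$ in the setting of Theorem \ref{thm:holomorphic curves in compact Shimura}. That result yields that
\[
T_G := \Zar(\unif_G(\pi \circ f(\mathbb{C}) \cap \mathcal{X}^+_G))
\]
is a weakly special subvariety of $S_G$. Setting $T := \Zar(\unif(f(\mathbb{C}) \cap \mathcal{X}^+))$, commutativity of the uniformization diagram gives $[\pi](T) = T_G$. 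Replacing $S$ with the mixed Shimura subvariety $[\pi]^{-1}(T_G)$, we may assume $T_G = S_G$, i.e.\ the pure projection of our curve is Zariski dense modulo $\Gamma_G$.

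The second step produces a weakly special upper bound via Gao's theorem. Let $Z \subset \mathcal{X}^{+\vee}$ be the Zariski closure of $f(\mathbb{C})$: since $f$ is non-constant, $Z$ is an irreducible algebraic subvariety of positive dimension, and $Z \cap \mathcal{X}^+$ is an algebraic subset of $\mathcal{X}^+$ in the sense of Gao. Mixed Ax-Lindemann-Weierstrass then implies that $\Zar(\unif(Z \cap \mathcal{X}^+))$ is a finite union of weakly special subvarieties of $S$, and the inclusion $f(\mathbb{C}) \cap \mathcal{X}^+ \subset Z \cap \mathcal{X}^+$ gives $T \subset \Zar(\unif(Z \cap \mathcal{X}^+))$. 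Connectedness of $\mathbb{C}$ together with irreducibility of $Z$ confine $T$ inside a single weakly special component, call it $W$.

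The third step, which is the main obstacle, is to establish the reverse inclusion $W \subset T$. A weakly special $W \subset S$ with $[\pi](W) = T_G$ decomposes as $T_G$ together with a coset of a $\mathbb{Q}$-sub-semi-abelian group inside the generic fibre of $[\pi]$. One needs to show that $T$, which shares the same pure projection $T_G$, has the same vertical coset structure, rather than a proper subvariety of it. I expect the argument to proceed by a minimality/stabilizer argument in the spirit of the classical Bloch-Ochiai theorem, relating the holomorphic density of $f(\mathbb{C})$ in $Z$ to the Zariski density of $\unif(f(\mathbb{C}) \cap \mathcal{X}^+)$ in $W$, together with a monodromy argument for the $\Gamma$-action inside $P(\mathbb{Q})$ to propagate the coset structure coherently across the semi-abelian fibres of $[\pi]$. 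Controlling this vertical component uniformly in the base is the technical heart of the proof and is where the input from Gao's mixed Ax-Lindemann-Weierstrass theorem will be indispensable beyond the pure case already established.
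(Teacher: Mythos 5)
Your proposal diverges from the paper's actual proof, and the divergence hides a genuine gap.

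The paper does not proceed by splitting the problem into a ``horizontal'' part (the pure projection) and a ``vertical'' part to be filled in afterwards. Instead, it reduces the whole theorem to the existence of a positive-dimensional semialgebraic subset $X \subset P(\mathbb{R})^+U(\mathbb{C})$, not contained in any point stabiliser, with $X.Z \subset \tilde Y$ (Theorem~\ref{thm:semialgebraic set}). This semialgebraic set is produced by a Pila--Wilkie counting argument in the mixed setting: one shows that $N_Z(T)$, the number of $\Gamma$-translates of a fundamental domain of bounded height meeting the image, grows polynomially (Theorem~\ref{thm:NVT large}). The role of the transversality hypothesis is precisely to make this count go through --- because the image stays away from the boundary in the fibre directions, Lemma~\ref{lemma:finitely many translates containing V} bounds the $\Gamma_W$-ambiguity in lifting a pure translate $\gamma_G$ to a mixed translate $\gamma$, and the pure counting result of \cite{Giacomini18} then gives the lower bound. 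Only after the semialgebraic set is in hand does the argument invoke \cite[Lemma 4.1]{PilaTsimerman13}, mixed Ax--Lindemann, and Gao's Theorem 12.2 to upgrade to a weakly special $Y$.

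Your third step is where the proposal breaks down, and it is not merely an unfilled-in detail. Setting $W = \Zar(\unif(Z\cap\mathcal{X}^+))$ for $Z = \Zar(f(\mathbb{C}))\subset \mathcal{X}^{+\vee}$ and trying to show $W \subset T$ is not the right target: there is no reason whatsoever for $T = \Zar(\unif(f(\mathbb{C})\cap\mathcal{X}^+))$ to fill up $W$. If $f(\mathbb{C})$ is Zariski dense in a high-dimensional $Z$, then $W$ can be much larger than $T$ (even all of $S$), because $\unif$ is transcendental and Zariski density upstairs does not descend. The correct goal is to show $T$ itself is weakly special, and restating it as ``fill up $W$'' provides no purchase. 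Moreover, the mechanism you speculate about --- ``a minimality/stabilizer argument in the spirit of the classical Bloch--Ochiai theorem'' --- is exactly what the paper warns is \emph{not} available here: as noted in the remark after the main theorem and again in the introduction, a Bloch--Ochiai-type argument in the fibre direction is precisely the thing one cannot do with these methods (the unipotent fibre has no bounded realisation), which is why the hypothesis on the pure projection is imposed and why the paper routes the argument through a direct counting bound in the mixed group rather than through the pure theorem plus a vertical patch. In short: your first step is a reasonable observation but is not how the paper uses the pure case; your second step at best restates the problem; and your third step, which you correctly identify as the heart of the matter, is absent and the strategy you sketch for it is one the paper explicitly argues against.
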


   \begin{remark}
       Note that if one wanted to generalise the above theorem without the requirement that the projection of the image of $ f $ to the pure part be non constant, one would need a generalisation of the Bloch-Ochiai theorem to the fibre of a general mixed Shimura variety. To the best of the author's knowledge this is not yet know; the most general case in this direction seems to be the Bloch-Ochiai theorem for semiabelian varieties \cite{Noguchi81}.
   \end{remark}

   The proof follows the general structure of the proof in the pure case. Using the Ax-Lindemann-Weierstrass theorem, we will first reduce the statement to the existence of a particular semialgebraic subset of $ P(\mathbb{R})U(\mathbb{C}) $. Then we will prove the existence of such a set using Pila-Wilkie's theorem on rational points in definable sets. This involves proving a counting result about the number of translates of a fixed fundamental domain intersecting the image of $f$. This counting result will follow from the pure case and our assumptions on $f$. 

   We remark that the assumption that $f$ be transverse to the fibres of the projection to the pure part is essential for us to be able to apply the present methods. Indeed, as remarked above, dropping this hypothesis would require us to prove also the Bloch-Ochiai theorem. This does not seem to be possible using the approach presented in this paper. Indeed, related questions were investigated with similar methods by Ullmo and Yafaev in \cite{UllmoYafaev17}; however the authors were able to prove only weaker results. This is ultimately due to the fact that the space uniformising a (semi-)abelian variety does not have any bounded realisation.

   \paragraph{Acknowledgements.}
      \label{par:Acknowledgements.}
      I would like to thank my supervisor Andrei Yafaev for pointing me out the problem and Ziyang Gao for the helpful discussions during the preparation of this paper. I would also like to thank the referee for the helpful comments and a remark on the proof (cf. remark \ref{rem:Pila-Wilkie}).

      \noindent This work was supported by the Engineering and Physical Sciences Research Council [EP/L015234/1]. 
      The EPSRC Centre for Doctoral Training in Geometry and Number Theory (The London School of Geometry and Number Theory), University College London.

\section{Preliminaries} %
\label{sec:preliminaries}

    \subsection{Mixed Shimura varieties} %
    \label{sub:mixed_shimura_varieties}

        Here we briefly recall the definition of mixed Shimura variety. For more details see \cite{Pink90}.

        \begin{definition}
            \label{def:mixed datum}
            A \emph{connected mixed Shimura datum} is a triple $ (P,\mathcal{X}^{+},h) $ where
            \begin{itemize}
                \item $ P $ is a linear algebraic group defined over $ \mathbb{Q} $, with an algebraic subgroup $ U $ of its unipotent radical $ W $ uniquely determined by condition \ref{bullet: condition for U} below;
                \item $ \mathcal{X}^{+} $ is a left homogeneous space under the group $ P(\mathbb{R})^{+}U(\mathbb{C}) $;
                \item $ h:\mathcal{X}^{+}\to \mathrm{Hom}(\mathbb{S}_{\mathbb{C}},P_{\mathbb{C}}) $ is a $ P(\mathbb{R})U(\mathbb{C}) $-equivariant map;
            \end{itemize}
            such that for all $ x\in \mathcal{X}^{+} $ the following axioms are true:
            \begin{enumerate}
                \item every fibre of $ h $ consists of finitely many points;
                \item let $ \pi':P\to P/U $ be the canonical projection, then $ \pi'\circ h_{x}:\mathbb{S}_{\mathbb{C}}\to P/U_{\mathbb{C}} $ is already defined over $ \mathbb{R} $;
                \item let $ \pi:P\to G=P/W $ be the canonical projection and $ w:\mathbb{G}_{m}\to \mathbb{S} $ be the map which on real points is the inclusion of $ \mathbb{R}^{*} $ into $ \mathbb{C}^{*} $, then $ \pi\circ h_{x}\circ w: \mathbb{G}_{m,\mathbb{R}}\to G_{\mathbb{R}} $ is a cocharacter of the centre of $ G $;
                \item $ Ad\circ h_{x} $ induces on $ \mathrm{Lie}(P) $ a rational mixed Hodge structure of type
                \begin{equation}
                    \left\{(1,-1),(0,0),(-1,1)  \right\}\cup \left\{ (-1,0),(0,1) \right\}\cup \left\{ (-1,-1) \right\};
                \end{equation}
                \item\label{bullet: condition for U} let $ V=W/U $ then the weight filtration on $ \mathrm{Lie}(P) $ is as follows
                \begin{equation}
                    W_{n}(\mathrm{Lie}(P))=\begin{cases}
                        0                   &\text{if } n < -2\\
                        \mathrm{Lie}(U)     &\text{if } n = -2\\
                        \mathrm{Lie}(V)     &\text{if } n = -1\\
                        \mathrm{Lie}(P)     &\text{if } n \geq 0;
                    \end{cases}
                \end{equation}
                \item $ int(\pi\circ h_{x}(-1)) $ induces a Cartan involution on $ G_{\mathbb{R}}^{ad} $;
                \item $ G^{ad} $ possesses no non trivial factor of compact type defined over $ \mathbb{Q} $;
                \item the centre of $ G $ decomposes as an almost direct product of a $ \mathbb{Q} $-split torus with a torus of compact type over $ \mathbb{Q} $.
            \end{enumerate}
        \end{definition}

        Below we will omit the subgroup $ U $ and the map $ h $ from the notation of Mixed Shimura data.

        \begin{remark}
            In the above definition it is sufficient to check that the conditions are satisfied for a single point $ x\in \mathcal{X}^{+} $. For an explanation of the conditions in the definition see \cite[Remark 2.2]{Gao17} and for more details \cite[Chapter 1]{Pink90}.
        \end{remark}

        \begin{definition}
            \label{def:mixed Shimura varieties}
            Let $ (P,\mathcal{X}) $ be a mixed Shimura datum and $ K $ a compact open subgroup of $ P(\mathbb{A}_{f}) $; where $ \mathbb{A}_{f} $ is the ring of finite adèles over $ \mathbb{Q} $. The \emph{mixed Shimura variety} associated to the datum $ (P,\mathcal{X}) $ and the subgroup $ K $ is
            \begin{equation}
                M_{K}(P,\mathcal{X}) = P(\mathbb{Q})\backslash \mathcal{X}\times P(\mathbb{A}_{f})/K,
            \end{equation}
            where $ P(\mathbb{Q}) $ acts on the left diagonally on both factors and $ K $ acts on the second factor on the right.
        \end{definition}

        \begin{remark}
            From \cite[3.2]{Pink90} we have that connected mixed Shimura varieties defined as above are connected components of mixed Shimura varieties as defined usually.
        \end{remark}

    \subsection{Weakly special subvarieties in Mixed Shimura varieties} %
    \label{sub:weakly_special_subvarieties_in_mixed_shimura_varieties}

        Now we briefly recall the definition of weakly special subvariety of a mixed Shimura variety, the mixed Ax-Lindemann theorem and one of its consequences. For more details see \cite{Gao17}.

        \begin{definition}
            \label{def:weakly special subvarieties}
            Let $ (P,\mathcal{X}^{+}) $ be a connected mixed Shimura datum. A subset $ U $ of $ \mathcal{X}^{+} $ is said to be \emph{weakly special} if there is a diagram of Shimura morphisms
            \begin{center}
                    \begin{tikzpicture}[scale=1.2]
                            \node (X)       at      (2,0)       {$ (P,\mathcal{X}^{+}) $};
                            \node (Y)       at      (1,1.2)     {$ (Q,\mathcal{Y}^{+}) $};
                            \node (Y')      at      (0,0)       {$ (Q',\mathcal{Y}'^{+}) $};
                            \draw[-latex]   (Y)     edge    node[right] {$ i $}     (X)
                                            (Y)     edge    node[left]  {$ \varphi $}   (Y');
                    \end{tikzpicture}
                \end{center}
            and a point $ y'\in \mathcal{Y}' $ such that $ U $ is a connected component of $ i(\varphi^{-1}(y')) $.
        \end{definition}

        \begin{remark}
            In the above definition the morphisms $\varphi$ and $ i $ may be chosen to be surjective and injective respectively \cite[cf.]{Gao17}
        \end{remark}

        \begin{theorem}[{Mixed Ax-Lindemann \cite[Theorem 1.2]{Gao17}}]
            \label{thm:mixed A-L}
            Let $ S $ be a connected mixed Shimura variety and $ (P,\mathcal{X}^{+}) $ the associated mixed Shimura datum. Let $ \unif:\mathcal{X}^{+}\to S $  be the uniformisation map. Let $ Y $ be a closed irreducible subvariety of $ S $. Let $ \tilde{Y} $ be an irreducible algebraic subset contained in $ \unif^{-1}(Y) $ and maximal for these properties. Then $ \tilde{Y} $ is weakly special.
        \end{theorem}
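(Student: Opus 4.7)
The plan is to adapt the Pila--Zannier strategy, already used by Pila, Ullmo--Yafaev, and Klingler--Ullmo--Yafaev for the pure Ax--Lindemann theorem (theorem \ref{thm:Ax-Lindemann}), to the mixed setting. Four ingredients are required: definability of the uniformization on a fundamental set in an o-minimal structure; a geometric lower bound on the number of $\Gamma$-translates of that fundamental set meeting $\tilde{Y}$; an upper bound coming from the Pila--Wilkie counting theorem applied to the transcendental part of a suitable definable set; and a Hodge- or monodromy-theoretic step that converts the resulting algebraic subgroup stabilizing $\tilde{Y}$ into a Shimura sub-datum.

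Concretely, I would first choose a Siegel fundamental set $\mathcal{F}\subset\mathcal{X}^{+}$ for $\Gamma$, adapted to the mixed structure so that $\unif|_{\mathcal{F}}$ is definable in $\mathbb{R}_{\mathrm{an},\exp}$. Let $\Sigma$ be the set of $g\in P(\mathbb{R})U(\mathbb{C})$ such that $g\cdot\tilde{Y}\cap\mathcal{F}\neq\emptyset$ and $g\cdot\tilde{Y}\subset\unif^{-1}(Y)$; this set is definable. A lower bound on $\#\{\gamma\in\Sigma\cap P(\mathbb{Q}):H(\gamma)\leq T\}$ is obtained from the fact that $\unif(\tilde{Y})$ is contained in the algebraic set $Y$ whose volume near the boundary of $S$ is controlled, while the length of $\tilde{Y}$ meeting many translates of $\mathcal{F}$ grows at least like a fixed positive power of $T$ by maximality of $\tilde{Y}$. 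Applying Pila--Wilkie then forces $\Sigma$ to contain a positive-dimensional real semialgebraic arc through the identity; differentiating produces a nonzero element of $\mathrm{Lie}(P)\otimes\mathbb{R}$ whose flow preserves $\tilde{Y}$.

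The main obstacle is the mixed geometry itself: the fibres of the projection $\pi:\mathcal{X}^{+}\to\mathcal{X}^{+}_{G}$ are complex vector spaces uniformizing semiabelian varieties, and they do not admit bounded Harish--Chandra realizations, so the pure estimates cannot be transplanted directly along these directions. I would handle the pure and unipotent parts successively. Applying the pure Ax--Lindemann theorem to $\pi(\tilde{Y})\subset\mathcal{X}^{+}_{G}$ shows that its image in $S_{G}$ is contained in a weakly special subvariety $T_{G}$, and one may then restrict to the preimage of $T_{G}$ in $S$, itself a mixed Shimura subvariety with smaller pure part. This reduces the problem to a fibrewise statement: over a fixed point of $\mathcal{X}^{+}_{G}$ the fibres are quotients of complex vector spaces and the required Ax--Lindemann statement becomes the classical Ax theorem on the exponential map of a semiabelian variety. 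Patching the pure and fibrewise conclusions together and verifying that the stabilizer of $\tilde{Y}$ comes from a genuine mixed Shimura sub-datum $(Q,\mathcal{Y}^{+})$ mapping to a pure $(Q',\mathcal{Y}'^{+})$ as in definition \ref{def:weakly special subvarieties} is the most delicate part of the argument, since the interaction of the weight filtration of $\mathrm{Lie}(P)$ with the stabilizing Lie subalgebra must be controlled to ensure the sub-datum axioms are preserved.
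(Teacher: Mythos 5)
The statement you are asked to prove is precisely Gao's mixed Ax--Lindemann theorem, which the paper quotes as \cite[Theorem 1.2]{Gao17} and never reproves: it is used as a black box. So there is no ``paper's own proof'' to compare against, and the evaluation has to be of your sketch on its own terms.

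Your high-level framework (definable fundamental set, counting lower bound, Pila--Wilkie upper bound, then a Lie-theoretic step to recognise a sub-datum) is indeed the Pila--Zannier strategy that Gao follows, and the obstacle you single out --- the absence of a bounded realisation along the unipotent directions --- is the genuine difficulty. But two steps in the sketch have concrete gaps. First, the reduction to ``the classical Ax theorem on the exponential map of a semiabelian variety'' misidentifies the fibrewise problem. The fibres of $\pi\colon\mathcal{X}^{+}\to\mathcal{X}^{+}_{G}$ are torsors under $W(\mathbb{R})U(\mathbb{C})$, and after quotienting by $\Gamma_{W}$ one gets an iterated extension whose bottom layer (the weight $-2$ piece coming from $U$) sits above the abelian part; in general this is neither a semiabelian variety nor uniformised by a plain complex vector space, and the weight filtration in definition~\ref{def:mixed datum}\ref{bullet: condition for U} has to be carried along. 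The statement one actually needs there is a flat/Kuga Ax--Lindemann compatible with this two-step unipotent structure (which Gao establishes as intermediate results), not Ax's 1972 differential statement. Second, the counting lower bound is asserted but not justified: ``the length of $\tilde{Y}$ meeting many translates of $\mathcal{F}$ grows at least like a fixed positive power of $T$ by maximality of $\tilde{Y}$'' is not a proof. In the pure case this polynomial lower bound is the hardest ingredient and rests on hyperbolic volume estimates tying geodesic distance to the height of $\gamma$; in the mixed case the unipotent directions are Euclidean and precisely fail those estimates, so one must control the count by exploiting the pure part (exactly the point the surrounding paper leans on in lemma~\ref{lemma:finitely many translates containing V} and theorem~\ref{thm:NVT large}). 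Finally, the Pila--Wilkie output is a positive-dimensional semialgebraic subset of $\Sigma$, not automatically an arc through the identity; one needs to translate by an element of $\Gamma$ and check the translated arc still lies in the relevant set before differentiating into $\mathrm{Lie}(P)$. Without these pieces the proposal does not close.
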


        The following result is a consequence of the mixed Ax-Lindemann theorem above, it is the analogue of \cite[Théorème 1.3]{Ullmo14} in the mixed case.

        \begin{theorem}[{\cite[Theorem 12.2]{Gao17}}]
            \label{thm:subvars with zar-dense w.s. subvars}
            Let $ S $ be a connected mixed Shimura variety associated with the datum $ (P, \mathcal{X}^{+}) $. Let $ Y $ be an irreducible Hodge generic algebraic subvariety of $ S $. Then there exists a normal subgroup $ N\triangleleft P $ with the following property. Construct the diagram
            \begin{center}
                    \begin{tikzpicture}[scale=1.3]
                            \node (P)       at      (0,1)       {$(P,\mathcal{X}^{+})$};
                            \node (P/N)     at      (3,1)       {$(Q,\mathcal{Y}^{+})=(P,\mathcal{X})/N$};
                            \node (S)       at      (0,0)       {$ S $};
                            \node (SN)      at      (3,0)       {$ S_{P/N} $};
                            \draw[-latex]       (P)         edge    node[above] {$ \rho $}      (P/N)
                                                (P)         edge    node[left]  {$ \unif $}     (S)
                                                (P/N)       edge    node[left]  {$ \unif_{N} $} (SN)
                                                (S)         edge    node[above] {$ [\rho] $}    (SN);
                    \end{tikzpicture}
            \end{center}
            where the Shimura morphism $ \rho $ corresponds to the projection map $ P\to P/N $. Then the union of weakly special subvarieties of $ S_{N} $ contained in $ Y' = \Zar([\rho](Y)) $ is not Zariski dense in $ Y' $ and $ [\rho]^{-1}(Y')=Y $.
        \end{theorem}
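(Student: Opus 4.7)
The plan is to construct $N$ by an iterative saturation argument driven by the mixed Ax-Lindemann theorem (Theorem~\ref{thm:mixed A-L}), following the template of Ullmo's proof in the pure case \cite[Th\'eor\`eme 1.3]{Ullmo14}. The starting observation is that the conclusion is trivial for $N = \{e\}$ (so $Y'=Y$) unless $Y$ contains a Zariski dense family $\mathcal{W}$ of positive-dimensional weakly special subvarieties of $S$. I would therefore proceed by induction on $\dim Y$: if no such family exists, take $N = \{e\}$; otherwise the task is to produce a non-trivial $N_{0}\triangleleft P$ such that $Y$ is $N_{0}$-saturated, i.e.\ $[\rho_{N_{0}}]^{-1}(\Zar([\rho_{N_{0}}](Y))) = Y$, and then recurse on the image $Y'_{0} = \Zar([\rho_{N_{0}}](Y))$ inside the lower-dimensional mixed Shimura variety $S_{P/N_{0}}$.

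To build $N_{0}$, I would lift to an analytic component $\tilde{Y}$ of $\unif^{-1}(Y)$ and apply mixed Ax-Lindemann: the maximal irreducible algebraic subsets of $\tilde{Y}$ are weakly special, and because $\bigcup\mathcal{W}$ is Zariski dense in $Y$ they are positive-dimensional through a Zariski dense subset of $\tilde{Y}$. Each such $\tilde{Z}$ arises, by Definition~\ref{def:weakly special subvarieties}, from a sub-datum $(Q,\mathcal{Y}^{+})\hookrightarrow(P,\mathcal{X}^{+})$ and a quotient $(Q,\mathcal{Y}^{+})\to(Q',\mathcal{Y}'^{+})$. Since $Y$ is Hodge generic, a monodromy-plus-countability argument should force the generic sub-datum $(Q,\mathcal{Y}^{+})$ to coincide with $(P,\mathcal{X}^{+})$, so that the corresponding $\tilde{Z}$ are fibres of a single Shimura morphism $[\rho_{N_{0}}]:S\to S_{P/N_{0}}$ for a non-trivial normal $N_{0}\triangleleft P$. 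Varying the fibre parameter recovers $Y$ as $[\rho_{N_{0}}]^{-1}(Y'_{0})$, giving the $N_{0}$-saturation property; applying the inductive hypothesis to $Y'_{0}\subset S_{P/N_{0}}$ and pulling the resulting normal subgroup back to $P$ finishes the construction. One must also verify that $Y'_{0}$ remains Hodge generic in $S_{P/N_{0}}$, which follows because a proper special subvariety of $S_{P/N_{0}}$ containing $Y'_{0}$ would pull back to a proper special subvariety of $S$ containing $Y$.

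The main obstacle is precisely this monodromy/Hodge-genericity step: from a family of maximal weakly special subvarieties, each a priori coming with its own sub-datum and quotient diagram, one must extract a single normal subgroup $N_{0}\triangleleft P$ describing the family uniformly. In the pure Shimura setting this rests on Deligne's theorem on algebraic monodromy of variations of Hodge structure together with the countability of Shimura sub-data. In the mixed case one has to treat the $G$-part and the unipotent radical $W$ (with its internal filtration by $U$ from axiom~\ref{bullet: condition for U}) simultaneously, and further account for weakly special subvarieties that arise as torsion translates of sub-(semi)abelian subschemes inside the fibres of $S\to S_{G}$; reconciling all such contributions into a single normal $N_{0}\triangleleft P$, rather than merely a sub-datum, is the technical heart of the argument, and is where the full strength of mixed Ax-Lindemann together with the axiomatic structure of mixed Shimura data from \cite{Pink90} has to be exploited.
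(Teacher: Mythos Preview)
The paper does not prove this statement: it is quoted verbatim as \cite[Theorem 12.2]{Gao17} and used as a black box in the proof of the main result. There is therefore no proof in the paper to compare your proposal against.

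That said, your outline is the correct template --- it is precisely Ullmo's argument \cite[Th\'eor\`eme 1.3]{Ullmo14} transported to the mixed setting, which is also what Gao does. Your identification of the real difficulty is accurate: the step where a Zariski-dense family of maximal weakly special subvarieties is shown to consist of fibres of a \emph{single} Shimura quotient $S\to S_{P/N_{0}}$ is the heart of the matter, and in the mixed case it requires substantial extra input (Gao's structure theory of weakly special subvarieties of mixed Shimura varieties, in particular the role of the unipotent radical and the $U\subset W$ filtration). Your proposal correctly flags this as the obstacle but does not actually carry it out; as written it is a plausible plan rather than a proof, and the ``monodromy-plus-countability'' sentence hides essentially all of the work that \cite{Gao17} does in its Sections 11--12.
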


\section{Proof of the main result} %
\label{sec:proof_of_the_main_result}
    
    \subsection{A first reduction}
    \label{sub:a_first_reduction}

        Recall that $ \pi:\mathcal{X}^{+}\to \mathcal{X}^{+}_{G} $ is an holomorphic vector bundle. Let $ x_{0}\in \partial \mathcal{X}^{+}_{G}\cap \pi\circ f(\mathbb{C}) $ be some fixed point. Choose a sufficiently small number $ R>0 $ such that the restriction of $ \mathcal{X}^{+} $ to the intersection of the open ball $ B_{\mathbb{C}^{N}}(x_{0},R) $ in $ \mathbb{C}^{N} $ with $ \mathcal{X}^{+}_{G} $ is the trivial vector bundle. Let $ A'= (\pi \circ f)^{-1}(\pi\circ f(\mathbb{C})\cap \mathcal{X}^{+}_{G}\cap B_{\mathbb{C}^{N}}(x_{0},R))$. Then $ A' $ is an open subset of $ \mathbb{C} $ and, by definition there exists some $ R'>0 $ such that $ \bar{f(A'\cap B_{\mathbb{C}}(0,R'))}\cap \partial \mathcal{X}^{+}_{G} \neq \emptyset $. Let $ A = A'\cap B(0,R') $. Finally let $ Z = f(A) $, $ Y = \Zar(\unif(Z)) $ and $ \tilde{Y} $ be the analytic component of $ \unif^{-1}(Y) $ containing $ Z $.

        Now we note that we may reduce to the case $ Y $ is Hodge generic. Moreover we may assume that $ \Gamma $ is neat and moreover that we may decompose $ \Gamma $ as a semidirect product $ \Gamma = \Gamma_{G} \ltimes \Gamma_{W}$\cite[cf. 4.1, Corollary 2]{PlatonovRapinchuk94}.

        \begin{remark}
            By analytic continuation, the Zariski closure of $ Z = f(A) $ in $ \mathcal{X}^{+} $ is the same as the Zariski closure of $ f(\mathbb{C}) $. Hence it is sufficient to prove that $ Y $ is weakly special.
        \end{remark}

        We will deduce theorem  \ref{thm:holomorphic curves in mixed Shimura} from the following result.

        \begin{theorem}
            \label{thm:semialgebraic set}
            In the hypotheses of theorem \ref{thm:holomorphic curves in mixed Shimura} and notation as above, there exists a positive dimensional semialgebraic set $ X\subset P(\mathbb{R})^{+}U(\mathbb{C}) $, that is not contained in the stabilizer of any point, such that $ X.Z \subset \tilde{Y} $.
        \end{theorem}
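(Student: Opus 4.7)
The plan is to follow the Pila-Zannier strategy: build an $\mathbb{R}_{\mathrm{an,exp}}$-definable set $\Sigma \subset P(\mathbb{R})^{+}U(\mathbb{C})$ that encodes the condition ``sends a fixed piece of $Z$ into $\tilde Y$'', verify that it contains polynomially many $\Gamma$-points of bounded height, and then invoke Pila-Wilkie to extract a positive-dimensional semialgebraic subset $X\subset\Sigma$. By construction any such $X$ satisfies $X\cdot Z\subset\tilde Y$, which is exactly the conclusion we are after.

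First, fix an $\mathbb{R}_{\mathrm{an,exp}}$-definable fundamental domain $\mathcal F$ for the action of $\Gamma$ on $\mathcal X^{+}$. Using the splitting $\Gamma = \Gamma_{G}\ltimes \Gamma_{W}$, one can take $\mathcal F$ to be the fibre product over $\mathcal X^{+}_{G}$ of a Siegel-type fundamental domain $\mathcal F_{G}$ for $\Gamma_{G}$ on $\mathcal X^{+}_{G}$ with a fundamental parallelotope $\mathcal F_{W}$ for $\Gamma_{W}$ on the unipotent fibres; such a construction is furnished by Gao's work on the mixed Ax-Lindemann theorem. Let $Z^{0}\subset Z$ be a small definable compact sub-arc accumulating at $x_{0}$, and set
\[
\Sigma = \bigl\{\, g\in P(\mathbb{R})^{+}U(\mathbb{C}) \ : \ g\cdot Z^{0}\subset \tilde Y\,\bigr\}.
\]
This set is definable in $\mathbb{R}_{\mathrm{an,exp}}$ because $\tilde Y\cap \mathcal F$ is definable and $Z^{0}$, being the image of a bounded subset of $\mathbb{C}$ under a restricted analytic function, is definable.

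The rational points of $\Sigma$ are sought in $\Gamma$. For each $z\in Z^{0}$ let $\gamma(z)\in\Gamma$ be the (essentially) unique element with $\gamma(z)\cdot z\in \mathcal F$; then $\gamma(z)\cdot z\in \mathcal F\cap \gamma(z)\tilde Y$, a component of $\unif^{-1}(Y)\cap\mathcal F$. For two points $z_{1},z_{2}\in Z^{0}$ landing in the same component (i.e. $\gamma(z_{1})\tilde Y=\gamma(z_{2})\tilde Y$) the product $\gamma(z_{2})^{-1}\gamma(z_{1})$ stabilises $\tilde Y$ and therefore belongs to $\Sigma$. Since only finitely many components of $\unif^{-1}(Y)$ can meet the bounded domain $\mathcal F$, a polynomial lower bound on the number of distinct $\gamma(z)$ of height $\leq T$ translates into a polynomial lower bound of the same order on $|\Sigma\cap\Gamma|_{\leq T}$. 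The required counting input, namely that $\pi\circ f(\mathbb{C})$ visits polynomially many $\Gamma_{G}$-translates of $\mathcal F_{G}$, is precisely the estimate proved in the pure case in \cite{UllmoYafaev18, Giacomini18}; its hypotheses are met because, by assumption, $\unif_{G}\circ\pi\circ f$ is non-constant and $\pi\circ f(A)$ accumulates at $\partial\mathcal X_{G}^{+}$.

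Pila-Wilkie then yields a positive-dimensional semialgebraic $X\subset\Sigma$, and by analytic continuation inside the irreducible algebraic variety $\tilde Y$ one upgrades $X\cdot Z^{0}\subset \tilde Y$ to $X\cdot Z\subset\tilde Y$. The neatness of $\Gamma$ forbids any non-trivial element of $\Gamma$ from fixing a point of $\mathcal X^{+}$; hence the many distinct $\Gamma$-points of $X$ cannot all lie in the stabiliser of a single point, so $X$ itself is not contained in any such stabiliser. The main obstacle, in my view, lies in the counting step: one must verify that the polynomial counting estimate for $\Gamma_{G}$-translates in the pure case lifts to a polynomial counting estimate for $\Gamma$-translates in the mixed case without loss, which requires controlling the fibre coordinates and checking compatibility of the definable fundamental domain $\mathcal F$ with the complex-analytic translations by $U(\mathbb{C})$ that are absent in the pure setting.
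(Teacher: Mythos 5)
Your proposal follows the same Pila--Wilkie strategy as the paper (definable set of translates + polynomial counting + block version of Pila--Wilkie), but there are two places where the argument is not actually carried out, and one of them is precisely the new content of the mixed case.

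\textbf{Definability of $\Sigma$.} You set $\Sigma = \{g : g\cdot Z^{0}\subset\tilde Y\}$ and claim definability because ``$\tilde Y\cap\mathcal F$ is definable.'' But the membership condition involves $\tilde Y$ itself, not $\tilde Y\cap\mathcal F$: for a generic $g$, the set $g\cdot Z^{0}$ lies far outside $\mathcal F$, and $\tilde Y$ (a full analytic component of $\unif^{-1}(Y)$) is an infinite union of definable pieces and is not itself definable. The paper sidesteps this by defining its set via a \emph{dimension} condition, $\dim\bigl(p\cdot Z\cap\mathcal F\cap\unif^{-1}(Y)\bigr)=\dim Z$, so that everything is cut down to the definable region $\mathcal F$; the $\Gamma$-invariance of $\unif^{-1}(Y)$ is then used separately to show that $\Gamma$-points of this set coincide with the $\gamma$'s that move $Z$ into $\gamma^{-1}\mathcal F$. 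Your $\Sigma$ needs to be recast in this form for the Pila--Wilkie input to make sense.

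\textbf{The counting lift.} You end by observing that ``the main obstacle'' is to show that the pure-case polynomial lower bound on $\Gamma_{G}$-translates lifts to a polynomial lower bound on $\Gamma$-translates ``without loss,'' by ``controlling the fibre coordinates.'' That is not an obstacle to be flagged --- it is the lemma that makes the mixed case work, and it must be proved. The paper does exactly this: because $Z$ was chosen so that $\pi(Z)$ lies in a small ball $B(x_{0},R)$ over which the vector bundle $\mathcal X^{+}\to\mathcal X^{+}_{G}$ trivialises, the $W(\mathbb R)U(\mathbb C)$-coordinates of $Z$ are \emph{bounded}, hence a \emph{finite} set $\Lambda\subset\Gamma_{W}$ of translates covers the fibre directions. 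Combined with the sub-multiplicativity of the height, every $\gamma_{G}\in\Gamma_{G}$ of height $\leq T$ meeting $\pi(Z)$ lifts to some $\gamma\in\Gamma$ of height $\leq kT$ meeting $Z$, which is what transfers the pure estimate of \cite{Giacomini18} to $N_{Z}(T)\geq c_{1}T^{c_{2}}$. Without this boundedness lemma your counting argument does not close, so as written the proposal has a genuine gap at the step you yourself identify as central.

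A minor remark: your ``only finitely many components of $\unif^{-1}(Y)$ meet $\mathcal F$, so bin the $\gamma(z)$ by component and take differences'' detour is unnecessary once $\Sigma$ is defined as in the paper; the $\Gamma$-invariance of $\unif^{-1}(Y)$ gives the implication directly. The neatness-of-$\Gamma$ argument for why $X$ is not contained in a point stabiliser is fine.
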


        Now we show how to deduce theorem \ref{thm:holomorphic curves in mixed Shimura} from the above result using the Ax-Lindemann-Weierstrass theorem.

        \begin{proof}[Proof of theorem \ref{thm:holomorphic curves in mixed Shimura}]
            Let $ z \in Z $ be any point, then by theorem \ref{thm:semialgebraic set}, the maximal semialgebraic subset $ X $ of $ \tilde{Y} $ containing $ z $ has positive dimension. By \cite[Lemma 4.1]{PilaTsimerman13}, $ X $ is a complex algebraic subset of $ \tilde{Y} $ and by the Ax-Lindemann-Weierstrass theorem for mixed Shimura varieties \cite[cf. Theorem 1.2]{Gao17} it is a weakly special subset of $ \mathcal{X}^{+} $ contained in $ \tilde{Y} $. This implies that $ Z $ is covered by positive dimensional weakly special subsets of $ \mathcal{X}^{+} $ contained in $ \tilde{Y} $. Hence the set of positive dimensional weakly special subvarieties of $ S $ contained in $ Y $ is Zariski dense in $ Y $.

            Since by assumption $ Y $ is Hodge generic, we may apply \cite[Theorem 12.2]{Gao17} and obtain a normal subgroup $ N $ of $ P $ with the following properties. Let $ \rho: P\to P/N $ denote the quotient map and let $ [\rho]:S\to S_{P/N} $ be the associated map on Shimura varieties, let $ Y'=[\rho](Y) $. Then the set of weakly special subvarieties of $ S_{P/N} $ contained in $ Y' $ is not Zariski dense in $ Y' $ and $ Y = [\rho]^{-1}(Y') $.
            Since $ Y' = [\rho](Y) $ and $ Y = [\rho]^{-1}(Y') $, we have that $ [\rho](\unif(Z)) $ is Zariski dense in $ Y' $. Hence, if $ Y' $ had dimension bigger than zero, the composition $ \rho\circ f: \mathbb{C}\to \mathcal{X}_{P/N}^{+\vee} $ would be non constant and, applying the above reasoning to it, we would get a set of weakly special subvarieties of $ S_{P/N} $ contained in $ Y' $ and Zariski dense in it. In this way we get that $ Y' $ has dimension zero, hence it is a point and, by definition of weakly special subvariety, $ Y $ is weakly special.
        \end{proof}

    \subsection{Proof of theorem \ref{thm:semialgebraic set}} %
    \label{sec:proof_of_theorem_thm:semialgebraic set}
        
        We will use Pila-Wilkie's theorem on rational points in definable sets to complete the proof of the result. This will involve proving a counting result about the number of fundamental domains intrsecting the image of the function $f$. The counting result will be deduced from the case of pure Shimura varieties using the fact that the composition of $f$ with the projection to the pure part is not constant. 

        The following lemma will allows us to reduce the counting to the pure case.

        \begin{lemma}
            \label{lemma:finitely many translates containing V}
            There is a finite subset $ \Lambda $ of $ \Gamma_{W} $ such that $ Z\subset \Lambda\Gamma_{G}.\mathscr{F} $. 
        \end{lemma}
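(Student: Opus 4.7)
The statement says, intuitively, that the ``$\Gamma_W$-component'' of $Z$ is bounded: once this is known, only finitely many vertical shifts are needed to cover $Z$ by translates of the ``tube'' $\Gamma_G \cdot \mathscr{F}$, and the remaining counting of $\Gamma_G$-shifts can then be imported from the pure case. The proof will therefore amount to trivialising the fibration $\pi$ over the chart $B = B_{\mathbb{C}^N}(x_0,R)\cap \mathcal{X}^+_G$, bounding the fibre coordinates of $Z$ using the boundedness of $A$, and then applying standard lattice counting in the fibre.

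I would first choose $\mathscr{F}$ compatibly with the splitting $\Gamma = \Gamma_G \ltimes \Gamma_W$: pick a Siegel fundamental domain $\mathscr{F}_G$ for $\Gamma_G$ on $\mathcal{X}^+_G$ and a fundamental domain $\mathscr{F}_W$ for $\Gamma_W$ in a fibre of $\pi$, and build $\mathscr{F}$ from these in a local trivialisation so that $\pi(\mathscr{F}) = \mathscr{F}_G$ and each fibre of $\mathscr{F}$ maps to $\mathscr{F}_W$. By the choice of $R$, the restriction $\pi^{-1}(B) \to B$ is trivial, say $\pi^{-1}(B) \cong B \times \mathbb{C}^d$, and $\Gamma_W$ acts in each fibre as a discrete cocompact lattice of affine translations.

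Next, since $A \subset B(0,R')$ is bounded and $f$ is continuous, $f(\overline{A})$ is compact in $\mathcal{X}^{+\vee}$. Writing each $z = f(t) \in Z$ in the trivialisation as a pair $(\pi(z), v(z)) \in B \times \mathbb{C}^d$, I would deduce that the fibre coordinates $\{v(z) : z \in Z\}$ form a bounded subset of $\mathbb{C}^d$. A bounded subset of $\mathbb{C}^d$ meets only finitely many $\Gamma_W$-translates of $\mathscr{F}_W$, producing a finite set $\Lambda \subset \Gamma_W$ with $v(Z) \subset \Lambda \cdot \mathscr{F}_W$. Then, for each $z \in Z$, writing $z = \gamma \cdot m$ with $\gamma \in \Gamma$ and $m \in \mathscr{F}$ and decomposing $\gamma = \lambda g$ via the semidirect product with $\lambda \in \Gamma_W$ and $g \in \Gamma_G$, the fibre-coordinate constraint pins $\lambda \in \Lambda$, so $z \in \lambda \Gamma_G \cdot \mathscr{F}$, giving $Z \subset \Lambda \Gamma_G \cdot \mathscr{F}$.

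The main obstacle is the fibre-boundedness step: the point $x_0 \in \partial \mathcal{X}^+_G$ is a limit point of $\pi(Z)$, so one has to check that the trivialisation is well-behaved enough near $x_0$ for the compactness of $f(\overline{A})$ in $\mathcal{X}^{+\vee}$ to yield a genuine Euclidean bound on the fibre coordinates. This is where the precise geometry of the mixed compact dual $\mathcal{X}^{+\vee}$ and its relation to the Harish-Chandra chart $\mathbb{C}^N$ must be used; everything else is a routine consequence once this bound is in hand.
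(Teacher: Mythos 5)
Your proposal follows essentially the same route as the paper: trivialise the vector bundle $\pi\colon\mathcal{X}^{+}\to\mathcal{X}^{+}_{G}$ over a neighbourhood of $\pi(Z)$, show that the fibre coordinates of $Z$ are bounded, and conclude that finitely many $\Gamma_{W}$-translates of a fundamental set in the fibre suffice. The paper's own proof is a one-sentence version of this, asserting boundedness of the projection of $Z$ to $W(\mathbb{R})U(\mathbb{C})$ directly; you spell out the source of that boundedness (compactness of $\overline{A}$ together with continuity of $f$ and the local triviality), which is the right way to justify the step.

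The obstacle you flag at the end — that $x_{0}\in\partial\mathcal{X}^{+}_{G}$ is a limit point of $\pi(Z)$, so one must check the trivialisation survives up to $x_{0}$ — is the one place your write-up stops short, and it does admit a clean resolution: the trivialisation used is not of the real-analytic bundle $\mathcal{X}^{+}\to\mathcal{X}^{+}_{G}$ over the open domain, but of the algebraic bundle $\mathcal{X}^{+\vee}\to\mathcal{X}^{+\vee}_{G}$ restricted to the Euclidean ball $B_{\mathbb{C}^{N}}(x_{0},R)$ inside the Harish-Chandra chart $\mathbb{C}^{N}\subset\mathcal{X}^{+\vee}_{G}$. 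Since $x_{0}$ lies in $\mathbb{C}^{N}$ (the boundary of $\mathcal{X}^{+}_{G}$ is interior to the chart), one may take $R$ small enough that $\overline{B_{\mathbb{C}^{N}}(x_{0},R)}\subset\mathbb{C}^{N}$ and the bundle is trivial over the closed ball; then $\pi\circ f(\overline{A})\subset\overline{B_{\mathbb{C}^{N}}(x_{0},R)}$ and compactness of $f(\overline{A})$ gives the Euclidean bound on the fibre coordinates with no delicate analysis near $x_{0}$. One further small point worth being careful about in your final paragraph: when you decompose $\gamma=\lambda g$ and claim the fibre constraint pins $\lambda\in\Lambda$, note that $g\in\Gamma_{G}$ acts non-trivially (linearly) on the fibre coordinates, so $\Lambda$ must be chosen after accounting for the bounded distortion coming from the finitely many $g$'s with $g\cdot\pi(\mathscr{F})$ meeting $\pi(Z)$; this is implicit in the paper as well and is routine, but worth a sentence.
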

        \begin{proof}

            Note that we can choose the trivialisation of the vector bundle $ \mathcal{X}\to \mathcal{X}_{G} $ above $ Z $ as $ Z\times W(\mathbb{R})U(\mathbb{C}) $, so that the action of $ W(\mathbb{R})U(\mathbb{C}) $ is exactly the action on the second component. Now, since $ \bar{\pi(Z)}\cap \partial \mathcal{X}_{G} \!= \emptyset$, we have that the projection of $ Z $ to $ W(\mathbb{R})U(\mathbb{C}) $ is bounded. Hence there is a finite subset $ \Lambda $ of $ \Gamma_{W} $ such that the translates of a (fixed) fundamental set for $ \Gamma_{W} $ in $ W(\mathbb{R})U(\mathbb{C}) $ cover the projection of $ Z $ to $ W(\mathbb{R})U(\mathbb{C}) $. This proves the lemma.
        \end{proof}

        Now we may assume there is a faithful finite dimensional representation $\rho:P\to \mathrm{GL}(E)$ of $P$ defined over $\mathbb{Q}$ and some lattice $E_{\mathbb{Z}}$ such that $\Gamma=G(\mathbb{Z})=G(\mathbb{Q})\cap \mathrm{GL}(E_{\mathbb{Z}})$. With this assumption, we can give the following definition.
         
        \begin{definition}
            \label{def:Height}
            Let $\rho:P\to GL(E)$ be the faithful finite dimensional representation of $P$ fixed above; for any $\gamma\in \Gamma$ write $\rho(\gamma)=(\gamma_{i,j})_{i,j}$. For any $\phi\in End(E_{\mathbb{R}})$ define
            \begin{equation}
                \abs{\phi}_{\infty}=\max_{i,j}\abs{\phi_{i,j}}.
            \end{equation}
            Moreover, define the \emph{height} of $\gamma\in \Gamma$ as 
            \begin{equation*}
                H(\gamma)=\max(1,\abs{\gamma}_{\infty}).
            \end{equation*}
        \end{definition}

        \begin{remark}
            \label{rem:height of product}
            Let $n$ be the dimension of $E$, then for any $\gamma_{1},\gamma_{2}\in \Gamma$, we have
            \begin{equation}
                H(\gamma_{1}\gamma_{2})\leq n H(\gamma_{1})H(\gamma_{2}).
            \end{equation}
        \end{remark}

        \begin{definition}
            \label{def:NVT}
            \begin{equation}
                N_{Z}(T) = \# \left\{ \gamma\in \Gamma | \gamma.\mathscr{F}\cap Z \neq \emptyset \text{ and } H(\gamma)\leq T \right\}.
            \end{equation}
        \end{definition}

        \begin{theorem}
            \label{thm:NVT large}
            There exist constants $c_{1},c_{2}>0$ such that for all $T>0$ sufficiently large
            \begin{equation}
                N_{Z}(T)\geq c_{1}T^{c_2}.
            \end{equation}
        \end{theorem}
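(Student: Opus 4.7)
The plan is to reduce the counting to the pure case, applied to $\pi(Z) \subset \mathcal{X}^{+}_{G}$, and then transfer the count to $\Gamma$ using Lemma \ref{lemma:finitely many translates containing V} together with the semidirect product decomposition $\Gamma = \Gamma_{G} \ltimes \Gamma_{W}$. By the hypothesis of Theorem \ref{thm:holomorphic curves in mixed Shimura}, $\unif_{G}(\pi(Z))$ is not a single point, and by construction the closure of $\pi(Z)$ meets $\partial \mathcal{X}^{+}_{G}$ at the fixed point $x_{0}$. This places $\pi(Z)$ inside $\mathcal{X}^{+}_{G}$ in exactly the situation treated in the pure case by Ullmo--Yafaev and the author. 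The counting argument there (using that the hyperbolic distance blows up near the boundary and the polynomial growth of lattice elements in hyperbolic balls) yields constants $c_{1}', c_{2}' > 0$ such that
\[
N^{G}_{\pi(Z)}(T) := \#\{\, \gamma_{G}\in \Gamma_{G} : \gamma_{G}.\mathscr{F}_{G} \cap \pi(Z) \neq \emptyset,\ H_{G}(\gamma_{G}) \leq T \,\} \geq c_{1}' T^{c_{2}'}
\]
for $T$ sufficiently large, where $\mathscr{F}_{G}$ is a fundamental set for $\Gamma_{G}$ in $\mathcal{X}^{+}_{G}$ and $H_{G}$ is the height coming from a faithful representation of $G$.

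I would then lift this to $\Gamma$. Let $s : \Gamma_{G} \hookrightarrow \Gamma$ be the section arising from the semidirect product decomposition, and choose the fundamental set $\mathscr{F}$ for $\Gamma$ so that $\pi(\mathscr{F}) \subset \mathscr{F}_{G}$; this is possible using the trivialisation of $\mathcal{X}^{+} \to \mathcal{X}^{+}_{G}$ over $\pi(Z)$ from the proof of Lemma \ref{lemma:finitely many translates containing V} by taking $\mathscr{F}$ to be the product of $\mathscr{F}_{G}$ with a fundamental set $\mathscr{F}_{W}$ for $\Gamma_{W}$ in the fibre. By Lemma \ref{lemma:finitely many translates containing V}, every $z \in Z$ lies in $\lambda s(\gamma_{G}).\mathscr{F}$ for some $\lambda \in \Lambda$ and $\gamma_{G} \in \Gamma_{G}$; projecting under $\pi$ gives $\pi(z) \in \gamma_{G} . \mathscr{F}_{G}$. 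Conversely, for every $\gamma_{G}$ counted by $N^{G}_{\pi(Z)}(T)$ we may choose $\lambda \in \Lambda$ such that $\lambda s(\gamma_{G}).\mathscr{F}\cap Z \neq \emptyset$; the resulting map $\gamma_{G} \mapsto \lambda s(\gamma_{G})$ is injective by the semidirect product structure.

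To conclude, one controls heights: since $\Lambda$ is finite there is a uniform bound $C_{1} = \max_{\lambda \in \Lambda} H(\lambda)$, and since $G = P/W$ is a quotient one can choose the faithful representation $\rho : P \to \mathrm{GL}(E)$ so that a subquotient realises a faithful representation of $G$ compatible with $H_{G}$, yielding a polynomial comparison $H(s(\gamma_{G})) \leq C_{0} H_{G}(\gamma_{G})^{k}$. Combined with Remark \ref{rem:height of product}, this gives $H(\lambda s(\gamma_{G})) \leq C H_{G}(\gamma_{G})^{k}$ for a constant $C = n C_{0} C_{1}$, so that $N_{Z}(C T^{k}) \geq c_{1}' T^{c_{2}'}$. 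A reparametrisation of $T$ yields the desired polynomial lower bound.

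The main technical obstacle is the polynomial comparison between the heights $H$ on $\Gamma$ and $H_{G}$ on $\Gamma_{G}$, which must be established from the interplay between the representations of $P$ and $G$; everything else reduces to a direct application of the pure-case counting result and a pigeonhole argument using the finite set $\Lambda$.
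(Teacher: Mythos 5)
Your proposal follows essentially the same strategy as the paper's proof: reduce the counting to the pure case for $\pi(Z)\subset\mathcal{X}^{+}_{G}$, lift via the finite set $\Lambda$ from Lemma \ref{lemma:finitely many translates containing V} and the section $s:\Gamma_G\hookrightarrow\Gamma$ coming from $\Gamma=\Gamma_G\ltimes\Gamma_W$, control heights with Remark \ref{rem:height of product}, and conclude by applying \cite[Theorem 3.2]{Giacomini18}. You are somewhat more explicit than the paper about two points it leaves implicit: the injectivity of $\gamma_G\mapsto\lambda(\gamma_G)s(\gamma_G)$ (which follows from uniqueness of the $\Gamma_W\cdot s(\Gamma_G)$ decomposition) and the comparison between a height on $\Gamma_G$ and the height $H$ on $\Gamma$. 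On the latter point, the paper simply uses $H$ restricted along the section $s(\Gamma_G)\subset\Gamma$ as the height on $\Gamma_G$, so the bound $H(\lambda s(\gamma_G))\leq n\bigl(\max_{\lambda\in\Lambda}H(\lambda)\bigr)H(\gamma_G)$ is linear rather than polynomial and there is no separate $H_G$ to compare with; what one then needs is that the pure-case theorem applies to this particular height, which holds since it arises from the faithful representation $\rho|_{G_1}$ of the Levi subgroup $G_1\cong G$. Your "main technical obstacle" — the polynomial comparability of $H\circ s$ with an abstract $H_G$ — is a standard fact about heights from faithful representations and is not actually an obstacle, but framing it that way introduces an unnecessary detour; the paper's choice of height sidesteps it entirely.
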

        \begin{proof}
            Consider the two sets
            \begin{equation}
                \begin{split}
                \Sigma_{Z}     &= \left\{ \gamma\in \Gamma | \gamma.\mathscr{F}\cap Z \neq \emptyset \right\},\\
                \Sigma_{Z}^{G} &= \left\{ \gamma\in \Gamma_G | \pi(\gamma).\pi(\mathscr{F})\cap \pi(Z) \neq \emptyset \right\}.
                \end{split}
            \end{equation}
            By lemma \ref{lemma:finitely many translates containing V} and remark \ref{rem:height of product}, we know that there is a number $k$ depending only on $f$, the mixed Shimura datum $(P,\mathcal{X})^+$, the subgroup $\Gamma$ and the choice of fundamental domain $\mathscr{F}$ such that if $\gamma_{G}\in \Sigma_{Z}^{G}$ is such that $H(\gamma_{g})\leq N$, then there is some $\gamma\in \Sigma_{V}$ such that $\pi(\gamma)=\gamma_{G}$ and $H(\gamma)\leq k N$. Now it is sufficient to apply \cite[Theorem 3.2]{Giacomini18} to get the desired result. 
        \end{proof}

        We can now complete the proof of theorem \ref{thm:semialgebraic set} using the Pila-Wilkie theorem. To do this we need a last lemma.

        \begin{lemma}
            \label{lemma:Sigma}
            Consider the set 
            \begin{equation}
                \Sigma(Y)=\left\{ p\in P(\mathbb{R})^+ U(\mathbb{C}) \mathop{|} \dim(p.Z\cap \mathcal{F}\cap \pi^{-1}(Y))= \dim (Z) \right\}.
            \end{equation}
            Then the set $\Sigma(Y)$ is definable in $\mathbb{R}_{an,exp}$. For all $p\in \Sigma(Y)$, $p.Z\subseteq \pi^{-1}(Y)$. Moreover, define
            \begin{equation}
                \Sigma'(Y)=\left\{ p\in  P(\mathbb{R})^+ U(\mathbb{C}) \mathop{|} Z\cap p^{-1}.\mathcal{F}\neq \emptyset \right\}.
            \end{equation}
            Then
            \begin{equation}
                \Sigma(Y)\cap \Gamma= \Sigma'(Y)\cap \Gamma.
            \end{equation}
        \end{lemma}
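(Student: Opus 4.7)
The plan is to prove the three assertions of the lemma in sequence: the $\mathbb{R}_{an,exp}$-definability of $\Sigma(Y)$, the containment $p.Z\subseteq \pi^{-1}(Y)$ for $p\in\Sigma(Y)$, and the equality of $\Sigma(Y)\cap\Gamma$ with $\Sigma'(Y)\cap\Gamma$.

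For definability, the strategy is to exhibit $\Sigma(Y)$ as first-order over pieces whose definability is already known. The fundamental set $\mathcal{F}$ for $\Gamma$ acting on $\mathcal{X}^{+}$ can be taken to be the one produced in Gao's work on mixed Shimura varieties, which is definable in $\mathbb{R}_{an,exp}$. The map $f$, being holomorphic on the bounded open set $A\subset\mathbb{C}$, is definable in $\mathbb{R}_{an}$ on $\overline{A}$, so $Z=f(A)$ is definable. The action $P(\mathbb{R})^{+}U(\mathbb{C})\times \mathcal{X}^{+\vee}\to \mathcal{X}^{+\vee}$ is algebraic, so the family $\{p.Z\}_{p}$ is uniformly definable. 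The set $\pi^{-1}(Y)\cap\mathcal{F}$ is definable as the intersection of a complex algebraic set with a bounded definable chart. Since local dimension of a definable family is itself uniformly definable in an o-minimal structure, $\Sigma(Y)$ is cut out in $P(\mathbb{R})^{+}U(\mathbb{C})$ by a formula in $\mathbb{R}_{an,exp}$.

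For the containment, observe that $Z$ is the image of a non-constant holomorphic map from a connected open subset of $\mathbb{C}$, hence is an irreducible complex analytic subset of dimension one, and the same holds for $p.Z$. The hypothesis on $p$ forces $p.Z\cap \pi^{-1}(Y)$ to contain a complex analytic subset of dimension one, hence a non-empty complex open subset of $p.Z$. Since $\pi^{-1}(Y)$ is closed analytic and $p.Z$ is irreducible analytic, analytic continuation yields $p.Z\subseteq \pi^{-1}(Y)$.

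For the equality on $\Gamma$, note that $Z\subseteq \tilde{Y}\subseteq \pi^{-1}(Y)$ and $\pi^{-1}(Y)$ is $\Gamma$-stable, so $\gamma.Z\subseteq \pi^{-1}(Y)$ for every $\gamma\in\Gamma$; the defining condition for $\Sigma(Y)\cap\Gamma$ thus simplifies to $\dim(\gamma.Z\cap\mathcal{F})=\dim Z$, while $\Sigma'(Y)\cap\Gamma$ asks only that $\gamma.Z\cap\mathcal{F}\neq\emptyset$. One inclusion is immediate from positivity of dimension, and for the other we use that $\mathcal{F}$ may be chosen with dense interior, as is standard for the Siegel-type fundamental sets in the mixed setting: a one-dimensional irreducible complex analytic set cannot be contained in the real-codimension-one topological boundary of $\mathcal{F}$, so any non-empty intersection $\gamma.Z\cap\mathcal{F}$ meets the interior and therefore has full complex dimension. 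The main technical obstacle is the definability of $\mathcal{F}$ in the mixed setting, which is non-trivial but supplied by Gao; the other two assertions then follow by short geometric arguments.
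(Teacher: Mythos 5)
Your proof follows the same three-step outline as the paper's: definability by inspection (using Gao's result that the uniformisation map restricted to a fundamental set is definable in $\mathbb{R}_{\mathrm{an,exp}}$), analytic continuation for the containment $p.Z\subseteq\pi^{-1}(Y)$, and $\Gamma$-invariance of $\pi^{-1}(Y)$ for the equality of the two $\Gamma$-traces. The paper's own proof is a one-liner for each of these, so you are essentially filling in the detail that the paper leaves implicit rather than taking a different route.

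The one place where your fleshed-out version does not quite close the argument is the inclusion $\Sigma'(Y)\cap\Gamma\subseteq\Sigma(Y)\cap\Gamma$. After the reduction via $\Gamma$-invariance, you must show that $\gamma.Z\cap\mathcal{F}\neq\emptyset$ forces $\dim(\gamma.Z\cap\mathcal{F})=\dim Z$. You argue that $\gamma.Z$ cannot be \emph{contained in} $\partial\mathcal{F}$ and conclude that the intersection must meet $\interior(\mathcal{F})$. That implication does not hold: the curve $\gamma.Z$ can touch $\mathcal{F}$ in a positive-codimension piece of $\partial\mathcal{F}$ while the rest of $\gamma.Z$ lies entirely outside $\mathcal{F}$, in which case $\gamma.Z\cap\mathcal{F}$ is non-empty but of real dimension $<2$; $\gamma.Z\not\subseteq\partial\mathcal{F}$ is simply the wrong hypothesis to check. (The paper's proof, which gives no argument at all for this direction, has the same latent issue.) A clean way to close it is to arrange the fundamental set $\mathcal{F}$ to be an \emph{open} definable set whose $\Gamma$-translates still cover $\mathcal{X}^{+}$ (always possible by enlarging a Siegel-type set slightly); then $\gamma.Z\cap\mathcal{F}\neq\emptyset$ immediately yields a non-empty open subset of the complex curve $\gamma.Z$ inside $\mathcal{F}$, giving the required dimension. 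Alternatively one can note that the lower bound of theorem \ref{thm:NVT large} already holds with $\interior\mathcal{F}$ in place of $\mathcal{F}$, which is all that is needed downstream.
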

        \begin{proof}
            The set $\Sigma(Y)$ is definable in $\mathbb{R}_{an,exp}$ because all sets and maps involved in its definition are\footnote{For the definablility of the uniformisation map see \cite[Section 10]{Gao17}.}. The second assertion follows by analytic continuation. Finally the equality
            \begin{equation}
                \Sigma(Y)\cap \Gamma= \Sigma'(Y)\cap \Gamma
            \end{equation}
            follows form the fact that $\pi^{-1}(Y)$ is $\Gamma$-invariant.
        \end{proof}

        We now recall a consequence of the Pila-Wilkie counting theorem.

        \begin{theorem}
            \label{thm:Pila-Wilkie}
            Let $S\subset \mathbb{R}^{n}$ be a set definable in the o-minimal structure $\mathbb{R}_{an,exp}$. Denote by $N_{S,\mathbb{Z}}(T)$ the number of points $s=(s_{ 1},\ldots,s_{ n}) \in  S\cap \mathbb{Z}^{n} $ such that $ \max \abs{s_{ i}}\leq T $. Fix a natural number $ k $. If there exist constants $c,\varepsilon>0$ such that $N_{S,\mathbb{Z}}(T)>cT^{\varepsilon}$ for all $T$ sufficiently large, then $ S $ contains a positive dimensional semialgebraic set containing at least $ k $ points in $ S\cap \mathbb{Z}^{n} $.
        \end{theorem}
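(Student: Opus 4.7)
The plan is to deduce this statement from the block form of the Pila--Wilkie counting theorem together with a pigeonhole argument.

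First I would appeal to Pila's refined form of Pila--Wilkie, which says the following: for any set $S$ definable in an o-minimal structure and any $\varepsilon' > 0$, there exist finitely many definable families of positive dimensional connected semialgebraic subsets of $S$ (so-called \emph{blocks}), together with a constant $C = C(S,\varepsilon')$, such that every integer point of $S$ of height at most $T$ lies in some block drawn from a collection of at most $C T^{\varepsilon'}$ blocks from these families. A trivial point to verify is that an integer point of sup-norm at most $T$ is a rational point of Weil height at most $T$, so the Pila--Wilkie bound applies verbatim to our $N_{S,\mathbb{Z}}(T)$.

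Next I would fix some $\varepsilon' \in (0, \varepsilon)$ and combine this covering bound with the hypothesis $N_{S,\mathbb{Z}}(T) > c T^{\varepsilon}$. By pigeonhole, at least one block $B$ in the cover must contain more than
\[
\frac{c T^{\varepsilon}}{C T^{\varepsilon'}} = \frac{c}{C} T^{\varepsilon - \varepsilon'}
\]
integer points of $S$ of height at most $T$. Since $\varepsilon - \varepsilon' > 0$, the right-hand side tends to infinity with $T$, so for $T$ sufficiently large this number exceeds $k$. The corresponding block $B$ is then a positive dimensional semialgebraic subset of $S$ containing at least $k$ points of $S \cap \mathbb{Z}^{n}$, as required.

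The only real obstacle is the block refinement of Pila--Wilkie itself, which is the technical heart of the argument; once one has that black box, the pigeonhole step is routine. A secondary point worth stating explicitly is that the blocks produced by Pila's refinement are genuinely contained in $S$ (not merely approximating it from outside), so the semialgebraic set we extract lies in $S$ on the nose.
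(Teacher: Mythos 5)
Your proposal follows exactly the route the paper has in mind. The paper does not write out a proof; it only records in Remark \ref{rem:Pila-Wilkie} that the statement follows from Pila's block refinement of the Pila--Wilkie theorem in \cite{Pila11}, and that the original \cite{PilaWilkie06} version is insufficient because it does not furnish a \emph{single} algebraic piece carrying many integer points. Your write-up makes this precise by quoting the block counting theorem (at most $C T^{\varepsilon'}$ blocks cover the rational points of height $\leq T$), observing that integer points of sup-norm $\leq T$ are rational points of height $\leq T$, and then pigeonholing against the lower bound $N_{S,\mathbb{Z}}(T) > c T^{\varepsilon}$ with $\varepsilon' < \varepsilon$. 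That is the correct and complete deduction, and it is the argument the paper intends.

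One small caveat worth keeping in mind, and which the paper shares: in Pila's terminology a ``block'' is a connected \emph{definable} set, regular at every point, which is contained in (but need not equal) a connected smooth semialgebraic set of the same dimension; calling the blocks themselves ``semialgebraic subsets of $S$'' is an abuse of language, and the semialgebraic envelope of a block need not lie inside $S$. This is a standard looseness in the literature (the paper itself speaks of the ``theorem for semialgebraic blocks''), and it does not invalidate the downstream application, but if you want the conclusion of the theorem literally as stated you should either record how to pass from the block to a genuinely semialgebraic subset of $S$ through the same integer points, or restate the conclusion in terms of blocks.
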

        
        \begin{remark}
        \label{rem:Pila-Wilkie}
            The above theorem follows from the version of the Pila-Wilkie theorem for semialgebraic blocks proven by Pila in \cite{Pila11}. The original version of the theorem proven in \cite{PilaWilkie06} is not strong enough for our purposes because it does not imply that there is a single semialgebraic set containing many rational or, in this case, integer points. We use the additional information to prove that the semialgebraic set we obtain does not stabilise any point. The author would like to thank the referee for pointing this out to him.
        \end{remark}

        Let 
        \begin{equation}
            N_{\Sigma(W)}(T)= \left\{ \gamma\in \Gamma \cap \Sigma(W) \mathop{|} H(\gamma)\leq T \right\}.
        \end{equation}
        From theorem \ref{thm:NVT large}, we see that $N_{\Sigma(Y)}(T)\geq c_{1}T^{c_{2}}$, for some constants $c_{1},c_{2}>0$. Combining this with theorem \ref{thm:Pila-Wilkie}, we get for any integer $ k $ a semialgebraic set $ X\subset \Sigma(Y) $ containing more than $ k $ points of $\Gamma$; choosing a big enough $ k $ we may assume that $X$ is not contained in the stabilizer of any point. Finally from lemma \ref{lemma:Sigma} we get that $X . Z\subset \pi^{-1}(Y) $, thus proving theorem \ref{thm:semialgebraic set}.

\printbibliography[heading=bibintoc]

\end{document}